\numberwithin{equation}{section}
\newtheorem{theorem}{Theorem}
\newtheorem{lemma}{Lemma}
\newtheorem{proposition}{Proposition}
\numberwithin{theorem}{section}
\numberwithin{corollary}{section}
\numberwithin{lemma}{section}
\numberwithin{definition}{section}
\numberwithin{proposition}{section}
\numberwithin{remark}{section}
\newcommand{\dint}{\ds\int}
\newcommand{\ds}{\displaystyle}
\newcommand{\R}{\mathbb R}
\newcommand{\medint}{-\kern  -,375cm\int}
\begin{document}
\title[ ]{Optimal lower bounds for eigenvalues \\
of linear and nonlinear Neumann problems}
\author[B. Brandolini, F. Chiacchio, C. Trombetti]{B. Brandolini$^*$ - F.
Chiacchio$^*$ - C. Trombetti$^*$}
\thanks{$^*$ Dipartimento di Matematica e Applicazioni ``R. Caccioppoli'',
Universit\`{a} degli Studi di Napoli ``Federico II'', Complesso Monte S.
Angelo, via Cintia - 80126 Napoli, Italy; email: brandolini@unina.it;
francesco.chiacchio@unina.it; cristina@unina.it}

\begin{abstract}
In this paper we prove a sharp lower bound for the first nontrivial Neumann eigenvalue $\mu_1(\Omega)$ for the $p$-Laplace operator in a Lipschitz, bounded domain $\Omega$ in $\R^n$. Our estimate does not require any convexity assumption on $\Omega$ and it involves the best isoperimetric constant relative to $\Omega$. 
\end{abstract}

\maketitle

\section{Introduction}

In this paper we provide sharp lower bounds for the first nontrivial
eigenvalue $\mu_1(\Omega)$ of the $p$-Laplacian operator with homogeneous
Neumann boundary conditions in a Lipschitz, bounded domain $\Omega$ of $\mathbb{R}^n$. 
Hence we deal with the following eigenvalue problem 
\begin{equation}  \label{problem}
\left\{ 
\begin{array}{ll}
-\Delta_pu=\mu |u|^{p-2}u & \mathrm{{in}\> \Omega} \\ 
&  \\ 
\dfrac{\partial u}{\partial \nu}=0 & \mathrm{{on}\> \partial \Omega,}
\end{array}
\right.
\end{equation}
where $\nu$ is the outward normal to $\partial \Omega$. It is well-known that $\mu_1(\Omega)$ can be characterized as follows
\begin{equation*}
\mu_1(\Omega)={\rm min} \left\{ \dfrac{\displaystyle\int_\Omega |Du|^p \, dx}{\displaystyle\int_\Omega |u|^p \, dx} :  u\in W^{1,p}(\Omega)\setminus \{0\}, \,\int_\Omega|u|^{p-2}u\, dx=0 \right\}.
\end{equation*}
Moreover $\mu_1(\Omega)^{-1/p}$ is the best constant in the following Poincar\' e inequality
\begin{equation*}
\inf_{t\in \R} \|u-t\|_{L^p(\Omega)}\le C_{\Omega,p} \|Du\|_{L^p(\Omega)}, \quad u\in W^{1,p}(\Omega).
\end{equation*}

In the celebrated paper \cite{PW} the authors prove that, when $p=2$ and $
\Omega$ is convex with diameter $d(\Omega)$ (see also \cite{V,ENT,FNT}) then 
\begin{equation}\label{PW}
\mu_1(\Omega) \ge \frac{\pi^2}{d(\Omega)^2}. 
\end{equation}
The above estimate is asymptotically sharp since  $\mu_1(\Omega){d(\Omega)^2}$ tends to $\pi^2$ for a
parallelepiped all but one of whose dimensions shrink to zero. On the other hand,
Payne-Weinberger estimate does not hold true in general for non convex sets.

Here we allow the set to be non convex and in place of the diameter our
estimate will involve $K_n(\Omega)$, the best isoperimetric constant   
relative to $\Omega$, that is 
\begin{equation}  \label{reliso}
K_n(\Omega)=\inf_{E \subset \Omega}\frac{P_\Omega(E)}{(\min\{|E|,|\Omega
\setminus E|\})^{1-1/n}},
\end{equation}
where $P_\Omega(E)$ is the perimeter of $E$ relative to $\Omega$ and $|\cdot|
$ stands for the $n$-dimensional Lebesgue measure. Obviously $K_n(\Omega)\le
K_n(\mathbb{R}^n)=n\omega_n^{1/n}$ (the classical isoperimatric constant),
where $\omega_n$ is the measure of the unitary ball in $\mathbb{R}^n$. On
the other hand $K_n(\Omega)>0$ since we are assuming that $\Omega$ has a
Lipschitz boundary (see \cite{M}). 

Let $\lambda_1(\Omega^\sharp)$ be the first Dirichlet eigenvalue of the ball 
$\Omega^\sharp$ having the same measure as $\Omega$. Our main result is the
following

\begin{theorem}
\label{main} Let $\Omega$ be a Lipschitz domain of $\mathbb{R}^n$. Then 
\begin{equation}  \label{estimate}
\mu_1(\Omega) \ge2^{p/n}\left(\dfrac{K_n(\Omega)}{K_n(\mathbb{R}^n)}
\right)^p\lambda_1(\Omega^\sharp).
\end{equation}
Furthermore \eqref{estimate} is sharp at least in the case $n=p=2$.
\end{theorem}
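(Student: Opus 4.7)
The strategy is a Faber--Krahn type symmetrization adapted to the Neumann problem via the relative isoperimetric inequality. Let $u\in W^{1,p}(\Omega)$ be a first nontrivial eigenfunction of \eqref{problem}. The constraint $\int_\Omega|u|^{p-2}u\,dx=0$ implies that both nodal components $\Omega_+=\{u>0\}$ and $\Omega_-=\{u<0\}$ have positive measure, and replacing $u$ by $-u$ if necessary one may assume $|\Omega_+|\le |\Omega|/2$. Taking $u^+=\max(u,0)\in W^{1,p}(\Omega)$ as test function in the weak form of the equation yields the identity
\begin{equation*}
\mu_1(\Omega)=\frac{\int_{\Omega_+}|Du|^p\,dx}{\int_{\Omega_+}u^p\,dx}.
\end{equation*}

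Next I would establish a sharpened P\'olya--Szeg\H{o} inequality in which $K_n(\Omega)$ replaces the classical isoperimetric constant. Let $\mu(t)=|\{u>t\}|$ and denote by $u^\natural$ the Schwarz symmetric decreasing rearrangement of $u^+$, viewed as a function on the ball $B^\natural\subset\mathbb{R}^n$ with $|B^\natural|=|\Omega_+|$; then $u^\natural$ vanishes on $\partial B^\natural$ and is equimeasurable with $u^+$. Since $\mu(t)\le|\Omega_+|\le|\Omega|/2$ for every $t>0$, the definition \eqref{reliso} of $K_n(\Omega)$ gives
\begin{equation*}
P_\Omega(\{u>t\})\ge K_n(\Omega)\,\mu(t)^{(n-1)/n},
\end{equation*}
whereas on the ball one has exactly $P(\{u^\natural>t\})=K_n(\mathbb{R}^n)\mu(t)^{(n-1)/n}$. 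Combining the Fleming--Rishel coarea formula with H\"older's inequality on each level set,
\begin{equation*}
P_\Omega(\{u>t\})^p\le\bigl(-\mu'(t)\bigr)^{p-1}\int_{\{u=t\}\cap\Omega}|Du|^{p-1}\,d\mathcal{H}^{n-1},
\end{equation*}
with equality attained by the radial profile $u^\natural$ (since $|Du^\natural|$ is constant on level sets), and integrating in $t$, one obtains
\begin{equation*}
\int_{\Omega_+}|Du|^p\,dx\ge\left(\frac{K_n(\Omega)}{K_n(\mathbb{R}^n)}\right)^p\int_{B^\natural}|Du^\natural|^p\,dx,
\end{equation*}
while equimeasurability preserves the denominator $\int_{\Omega_+}u^p\,dx=\int_{B^\natural}(u^\natural)^p\,dx$.

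Since $u^\natural\in W_0^{1,p}(B^\natural)$, it is admissible in the variational characterization of the first Dirichlet $p$-eigenvalue $\lambda_1(B^\natural)$, so the right-hand ratio dominates $\lambda_1(B^\natural)$. Finally $|B^\natural|=|\Omega_+|\le|\Omega^\sharp|/2$, so $B^\natural$ has radius at most $2^{-1/n}$ times that of $\Omega^\sharp$, and the scaling $\lambda_1(B_r)=r^{-p}\lambda_1(B_1)$ of the $p$-Laplacian gives $\lambda_1(B^\natural)\ge 2^{p/n}\lambda_1(\Omega^\sharp)$. Concatenating the three estimates yields \eqref{estimate}. The main technical difficulty lies in the P\'olya--Szeg\H{o} step: one must justify the coarea--H\"older chain also when $\{|Du|=0\}$ has positive measure and when level sets meet $\partial\Omega$, issues handled by standard truncation and approximation arguments for $p$-Laplace symmetrization, together with the $C^{1,\alpha}$ regularity of $u$. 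To verify sharpness when $n=p=2$, I would exhibit a family of dumbbell-like domains whose two lobes carry $\Omega_+$ and $\Omega_-$ with (asymptotically) equal area, designed so that both the ratio $K_n(\Omega)/K_n(\mathbb{R}^n)$ and the balanced volume splitting $|\Omega_+|=|\Omega|/2$ are saturated in the limit.
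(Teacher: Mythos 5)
Your proof of the lower bound is correct and takes a genuinely different, more direct route than the paper. The paper develops a Chiti-type comparison principle (its Proposition 2.2) and a reverse H\"older inequality $\|u_1^+\|_{L^q}\le C\|u_1^+\|_{L^r}$ (its Theorem 2.4), then obtains \eqref{estimate} by letting $q\to r\to 0$ in that inequality. You instead run a one-shot symmetrization: testing with $u^+$ gives $\mu_1=\int_{\Omega_+}|Du|^p/\int_{\Omega_+}u^p$, the coarea--H\"older chain combined with the relative isoperimetric inequality $P_\Omega(\{u>t\})\ge K_n(\Omega)\mu(t)^{(n-1)/n}$ (valid because $\mu(t)\le|\Omega|/2$) yields
$\int_{\Omega_+}|Du|^p\ge \bigl(K_n(\Omega)/K_n(\mathbb{R}^n)\bigr)^p\int_{B^\natural}|Du^\natural|^p$,
equimeasurability preserves the $L^p$ norm, and $|B^\natural|\le|\Omega|/2$ gives the factor $2^{p/n}$ via scaling. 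Checking the numerology, this reproduces exactly the constant $2^{p/n}(\psi_p/n)^p K_n(\Omega)^p/|\Omega|^{p/n}$ that the paper derives. What your route misses, and what the paper's longer route buys, is the intermediate Theorem 2.4 (the sharp reverse H\"older inequality for $u_1^+$), which is a result of independent interest and is used in the paper's comparison with the earlier $p=2$ bound from \cite{BCT}. For proving Theorem \ref{main} alone, however, your argument is shorter and correct, modulo the standard measure-theoretic care in the coarea step that you correctly flag.

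Your sharpness sketch, by contrast, does not work. Dumbbell domains do not saturate \eqref{estimate}: for a planar dumbbell with neck width $\epsilon$, one has $\mu_1\sim\epsilon$ and $K_2(\Omega)\sim\epsilon$, so $\mu_1/K_2(\Omega)^2\sim 1/\epsilon\to\infty$, and the inequality becomes arbitrarily far from equality. The structural reason is that the level sets $\{u>t\}$ for $t$ near $\max u^+$ are small nearly round sets interior to one lobe, where $P_\Omega(\{u>t\})/\mu(t)^{1/2}$ is close to the classical constant $2\sqrt{\pi}$ and not to $K_2(\Omega)$, so the relative isoperimetric step is badly loose on the range of $t$ that dominates the integral. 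The paper's extremal family is a sequence of rhombi $\Omega_m$ with side $1$ and acute angle $\beta_m\to 0$: the nodal line is the short diagonal (giving $|\Omega_+|=|\Omega|/2$ exactly), the restriction to the half-rhombus converges to the first Dirichlet--Neumann eigenfunction of a circular sector centered at the acute vertex, whose level sets are arcs that asymptotically saturate the relative isoperimetric inequality for \emph{every} $t$. With $K_2(\Omega_m)=\sqrt{2\sin\beta_m}$, $\lambda_1(\Omega_m^\sharp)=\pi j_{0,1}^2/\sin\beta_m$ and $\mu_1(\Omega_m)\to j_{0,1}^2$, one checks that the ratio in \eqref{estimate} tends to $2=2^{2/2}$. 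So replace the dumbbell idea with the thin-rhombus construction.
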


We remark that for $p = 2$ \eqref{estimate} improves previous results
contained in \cite{BCT}, while in the case $p > 2$ we  prove (see Section 3) that 
\eqref{estimate} is better than the ones already available in literature
(see \cite{A,AM}).

In order to prove \eqref{estimate} we consider an eigenfunction $u_1$
corresponding to $\mu_1(\Omega)$ such that $|\mathrm{supp}
\left(u_1^+\right)|\le \dfrac{|\Omega|}{2}$, where
$u_1^+(x) = \max \{u_1(x), 0\}$.
 Then we prove a comparison
result \`a la Chiti and in turn a Payne-Rainer type inequality for for $u_1^+$ (see \cite{Ch} and also \cite{AC,BCF, BCT,BT,DPG}).
Namely, we show that 
\begin{equation*}
||u_1^+||_{L^q(\Omega)} \le C ||u_1^+||_{L^r(\Omega)}, \qquad 0<r<q<+\infty 
\end{equation*}
where $C$ is positive constant whose value depends on $n,p,q,r,K_n(\Omega),
\mu_1(\Omega)$ and is explicitly given in Section 2. This technical result together with a limit as $q\to r\to 0$ 
will be the key ingredients in the proof of Theorem \ref{main}.
Finally, when $p=n=2$, we consider a sequence of rhombi $\Omega_m$ of side 1
and acute angle $\beta_m=\frac{2\pi}{m}$ ($m \ge 4$). In our previous paper 
\cite{BCT} we proved a reverse H\"older inequality for $u_1$ that becomes
asymptotically sharp on $\Omega_m$. Here we show that estimate 
\eqref{estimate} is asymptotically sharp along the same sequence of domains.

Finally, for the interested reader, other estimates for eigenvalues of Neumann problems can be found for instance in \cite{BPV, BNT, LS, BCM, CdB, BCT1, BCTH}.

\section{A Payne-Rainer type inequality}

In this section we prove a reverse H\"older inequality for an eigenfunction 
$u_1$ corresponding to $\mu_1(\Omega)$. To this aim we recall some notation
about rearrangements and we provide some auxiliary lemmata.

Let $\Omega $ be a bounded, open set in $\mathbb{R}^{n}$ and let $u$ be a
measurable real function defined in $\Omega $. The distribution function of 
$u$ is defined by 
\begin{equation*}
m(t)=|\{x\in \Omega :\>u(x)>t\}|, \qquad t \in \mathbb{R}, 
\end{equation*}
while the decreasing rearrangement of $u$ is the function 
\begin{equation*}
u^{\star }(s)=\sup \left\{ t\in \mathbb{R}:\>m(t)> s\right\} ,\quad s\in (
0,|\Omega |).
\end{equation*}
It is easy to see that $u^{\star }$ is a non increasing, right-continuous
function defined in $(0,|\Omega |)$, equidistributed with $u$, that means
that $u$ and $u^{\star }$ have corresponding superlevel sets with the same
measure. This feature implies that $u$ and $u^{\star }$ have the same $L^{p}$
norms 
\begin{equation*}
||u||_{L^{p}(\Omega )}=||u^{\star }||_{L^{p}(0,|\Omega |)},\quad \forall
p\geq 1,
\end{equation*}
and, clearly, 
\begin{equation*}
\int_{\Omega }u\,dx=\int_{0}^{|\Omega |}u^{\star }(t)dt.
\end{equation*}
\noindent For an exhaustive treatment on rearrangements see, for instance, 
\cite{Ka,T,H,Ke}.

The symmetrization procedure we will adopt will lead us to consider a
one-dimensional Sturm-Liouville problem of the type 
\begin{equation}\label{ode}
\left\{ 
\begin{array}{ll}
-\left(|\psi'(s)|^{\gamma-2}\psi'(s)\right)^{\prime}=\sigma |\psi(s)|^{\gamma-2}\psi(s)s^{-\beta}, \qquad s
\in (0,A) 
&  \\ \\
\psi(0)=\psi^{\prime }(A)=0, & 
\end{array}
\right.
\end{equation}
with $\gamma \geq 1$ and  $\beta > 0$.
We consider the functional space naturally associated to \eqref{ode}
$$
\mathcal{W}=\left\{ \phi \in W^{1,\gamma}(0,A):
\phi(0)=0  \right\},
$$
endowed with the norm $||\phi||_{\mathcal{W}}=\left(\int_{0}^{A}\left\vert \phi ^{\prime }(s)\right\vert ^{\gamma }ds\right)^{1/\gamma}$, and 
the weighted Lebesgue space
$$
L^{\gamma }( (0,A);s^{-\beta }) 
=
\left\{ \phi :\left[ 0,A\right]\rightarrow \mathbb{R}:
\left\Vert \phi \right\Vert_{L^{\gamma }((0,A);s^{-\beta }) }
=
\left(\int_{0}^{A}\left\vert \phi \right\vert^{\gamma }s^{-\beta }ds\right)^{1/\gamma}<\infty \right\} .
$$
A result contained in \cite{N} ensures that $\mathcal{W}$  is compactly embedded in 
$L^{\gamma }( (0,A);s^{-\beta }) $. Here, for the reader's convenience, we provide a simple proof based on the one dimensional Hardy inequality (see also \cite{C} for the linear case). 

\begin{lemma}
\label{lemma} Let $A>0,  \gamma>1$ and $0<\beta<\gamma$. Then $\mathcal{W}$ is compactly embedded in $L^\gamma((0,A);s^{-\beta})$.
\end{lemma}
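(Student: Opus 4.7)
The plan is to obtain the continuous embedding as a direct consequence of the one-dimensional Hardy inequality, and then upgrade it to a compact embedding by combining Hardy with the classical Rellich--Kondrachov theorem on $(0,A)$, localizing the weight singularity at $s=0$.

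For the continuous embedding I would invoke Hardy's inequality: for every $\phi\in W^{1,\gamma}(0,A)$ with $\phi(0)=0$ and $\gamma>1$,
$$\int_0^A \left|\frac{\phi(s)}{s}\right|^\gamma ds \le \left(\frac{\gamma}{\gamma-1}\right)^\gamma \int_0^A |\phi'(s)|^\gamma ds.$$
Since $0<\beta<\gamma$, for $s\in(0,A)$ one has $s^{-\beta}=s^{\gamma-\beta}s^{-\gamma}\le A^{\gamma-\beta} s^{-\gamma}$, so
$$\int_0^A |\phi(s)|^\gamma s^{-\beta}\, ds \le A^{\gamma-\beta}\int_0^A |\phi(s)|^\gamma s^{-\gamma}\, ds \le C(\gamma,\beta,A)\,\|\phi\|_{\mathcal{W}}^\gamma,$$
which proves continuity.

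For compactness I would take a bounded sequence $\{\phi_n\}\subset\mathcal{W}$, say $\|\phi_n'\|_{L^\gamma(0,A)}\le M$. Since $\gamma>1$, the embedding $W^{1,\gamma}(0,A)\hookrightarrow C([0,A])$ is compact, so along a subsequence $\phi_n\to\phi$ uniformly on $[0,A]$; in particular $\phi(0)=\lim\phi_n(0)=0$, and a weak-$L^\gamma$ limit of $\phi_n'$ identifies as $\phi'$, so $\phi\in\mathcal{W}$ with $\|\phi'\|_{L^\gamma}\le M$. Fix $\eps>0$ and $\delta\in(0,A)$ to be chosen, and split
$$\int_0^A |\phi_n-\phi|^\gamma s^{-\beta}\, ds = \int_0^\delta |\phi_n-\phi|^\gamma s^{-\beta}\, ds + \int_\delta^A |\phi_n-\phi|^\gamma s^{-\beta}\, ds.$$
On $(\delta,A)$ the weight is bounded and uniform convergence gives
$$\int_\delta^A |\phi_n-\phi|^\gamma s^{-\beta}\, ds \le \|\phi_n-\phi\|_{C([0,A])}^\gamma \int_\delta^A s^{-\beta}\, ds \longrightarrow 0.$$
On $(0,\delta)$ I apply the same Hardy estimate to $\phi_n-\phi\in\mathcal{W}$, localized on $(0,\delta)$:
$$\int_0^\delta |\phi_n-\phi|^\gamma s^{-\beta}\, ds \le \delta^{\gamma-\beta}\left(\frac{\gamma}{\gamma-1}\right)^\gamma \int_0^\delta |\phi_n'-\phi'|^\gamma\, ds \le C_\gamma\, \delta^{\gamma-\beta}\,(2M)^\gamma,$$
which is uniform in $n$. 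Choosing $\delta$ small to make the first piece less than $\eps/2$, and then $n$ large to make the second piece less than $\eps/2$, concludes the argument.

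The only delicate point, and precisely what Hardy's inequality takes care of, is the uniform control of the tail near $s=0$: $L^\gamma$ (or even uniform) convergence alone is not enough to tame the weight $s^{-\beta}$, but the factor $s^{\gamma-\beta}$ gained by trading $s^{-\beta}$ against $s^{-\gamma}$ shrinks to zero and multiplies the Hardy quantity, which is uniformly bounded along the sequence. Everything else is standard.
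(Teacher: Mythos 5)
Your argument is correct and essentially mirrors the paper's proof: both establish the continuous embedding via the one-dimensional Hardy inequality and then obtain compactness by splitting the interval near $s=0$, using Hardy to control the singular part uniformly along the bounded sequence. The only cosmetic difference is that the paper extracts a subsequence converging strongly in $L^\gamma(0,A)$ and chooses the splitting point in terms of that $L^\gamma$ error, whereas you invoke uniform convergence via the Morrey embedding $W^{1,\gamma}(0,A)\hookrightarrow C([0,A])$ and fix the splitting point first; the underlying mechanism is identical.
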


\begin{proof}
Let $\phi \in \mathcal{W}$; by Hardy inequality it holds 
\begin{equation*}
\int_0^A\frac{|\phi(s)|^\gamma}{s^\beta}\,ds \le A^{\gamma-\beta}\int_0^A \frac{
|\phi(s)|^\gamma}{s^\gamma}\,ds\le A^{\gamma-\beta}\left(\frac{\gamma}{\gamma-1}
\right)^\gamma \int_0^A|\phi'(s)|^\gamma\,ds, 
\end{equation*}
that is $\mathcal{W}$ is continuously embedded in $L^\gamma((0,A);
s^{-\beta})$. Now consider a
sequence $\{\phi_m \}_{m\in \mathbb{N}} \subset \mathcal{W}$ such
that $||\phi_m||_{\mathcal{W}}\le 1.$ By classical results on Sobolev spaces there
exists $\phi \in \mathcal{W}$ such that, up to a subsequence, 
$$
\phi_m \to \phi \quad \mbox{a.e.},\quad\quad \phi_m \rightharpoonup \phi \quad \mbox{in}\>\> \mathcal{W}, \quad\quad \phi_m \to \phi \quad \mbox{in}\>\> L^{\gamma}(0,A).
$$
We claim that $\phi_m \to \phi$ in $L^\gamma((0,A); s^{-\beta})$. Fix 
$\epsilon >0$ and let $m$ be large enough to ensure 
\begin{equation*}
\int_0^A|\phi_m(s)-\phi(s)|^\gamma\,ds < \epsilon^{\gamma+\beta}. 
\end{equation*}
Then 
$$
\int_0^A\frac{|\phi_m(s)-\phi(s)|^\gamma}{s^\beta} \,ds
\le \epsilon^{\gamma-\beta}\int_0^\epsilon \frac{|\phi_m(s)-\phi(s)|^\gamma}{
s^\gamma}\,ds+\epsilon^{-\beta}\int_\epsilon^A|\phi_m(s)-\phi(s)|^\gamma\,ds 
\le C\epsilon^{\gamma-\beta},
$$
where $C$ is a positive constant whose value does not depend on $m$.
\end{proof}

\noindent From Lemma \ref{lemma} we immediately deduce that, when $\gamma >1$ and  $0<\beta<\gamma$, the first
eigenvalue $\sigma_1(0,A)$ of problem \eqref{ode} can be variationally characterized as follows 
\begin{equation*}
\sigma_1(0,A)=\min\left\{\frac{\int_0^A |\phi'(s)|^{\gamma}ds}{\int_0^A
|\phi(s)|^\gamma s^{-\beta}ds}:\> \phi \in
\mathcal{W}\setminus\{0\}\right\}.
\end{equation*}
Moreover it is simple (see for instance Theorem 2.3 in \cite{AFT}).

Now we can turn our attention on the original problem \eqref{problem}. From now on we will assume that $\Omega$ is a bounded, Lipschitz domain of $\R^n$.

\begin{lemma}
Let $u_1$ be an eigenfunction corresponding to $\mu_1(\Omega)$. Then the
following inequalities hold 
\begin{eqnarray}
\left(-u_1^\star(s)^{\prime }\right)^{p-1} &\le& \frac{\mu_1(\Omega)}{
K_n(\Omega)^p} \>s^{-p+p/n}\int_0^s |u_1^\star(t)|^{p-2}u_1^\star(t)\,dt,
\qquad s\le \frac{|\Omega|}{2} ,  \label{1star} \\
\left(-u_1^\star(s)^{\prime }\right)^{p-1} &\le& \frac{\mu_1(\Omega)}{
K_n(\Omega)^p} \> \left(|\Omega|-s\right)^{-p+p/n}\int_0^s
|u_1^\star(t)|^{p-2}u_1^\star(t)\,dt ,\qquad s>\frac{|\Omega|}{2}.
\label{2star}
\end{eqnarray}
\end{lemma}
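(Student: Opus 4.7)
The plan is to apply a Talenti-type symmetrization argument to the Euler-Lagrange equation, with the twist that beyond $|\mathrm{supp}(u_1^+)|$ one must work with sublevel sets rather than superlevel sets and exploit the orthogonality condition $\int_\Omega |u_1|^{p-2} u_1 \, dx = 0$ that any $\mu_1$-eigenfunction satisfies. Let $m(t) = |\{u_1 > t\}|$, so that $s = m(t)$ inverts to $u_1^\star(s) = t$.

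First, for any level $t$, I integrate the PDE $-\Delta_p u_1 = \mu_1(\Omega) |u_1|^{p-2} u_1$ over a superlevel or sublevel set, using the divergence theorem. For $t > 0$, integration over $U_t = \{u_1 > t\}$ (whose boundary meets $\partial \Omega$ only where the Neumann condition kills the flux) gives
\[
\int_{\{u_1 = t\} \cap \Omega} |Du_1|^{p-1}\, d\mathcal{H}^{n-1} = \mu_1(\Omega) \int_0^s (u_1^\star)^{p-1}\, d\tau.
\]
For $t \le 0$ I instead integrate over $V_t = \{u_1 < t\}$; the orthogonality condition rewrites $\int_{V_t} |u_1|^{p-2} u_1\, dx = -\int_{\Omega \setminus V_t} |u_1|^{p-2} u_1\, dx$ and, since $|\Omega \setminus V_t| = s$, equimeasurability yields
\[
\int_{\{u_1 = t\} \cap \Omega} |Du_1|^{p-1}\, d\mathcal{H}^{n-1} = \mu_1(\Omega) \int_0^s |u_1^\star|^{p-2} u_1^\star\, d\tau.
\]
The two formulas unify to the latter, which is exactly the form of the right-hand sides of (1star) and (2star).

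Second, I combine three standard ingredients. Hölder's inequality on $\{u_1=t\} \cap \Omega$ gives
\[
P_\Omega(\{u_1>t\})^p \le \Bigl(\int_{\{u_1=t\}} |Du_1|^{p-1}\, d\mathcal{H}^{n-1}\Bigr)(-m'(t))^{p-1},
\]
the coarea formula provides $-m'(t) = \int_{\{u_1=t\}} |Du_1|^{-1}\, d\mathcal{H}^{n-1}$, and the rearrangement identity $(-u_1^\star{}'(s))^{p-1} = (-m'(t))^{-(p-1)}$ for a.e.\ $s = m(t)$ converts everything to the rearrangement variable. Finally the relative isoperimetric inequality yields $P_\Omega(\{u_1>t\}) \ge K_n(\Omega)\, \min(s, |\Omega|-s)^{1-1/n}$. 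Assembling these gives
\[
K_n(\Omega)^p\, \min(s, |\Omega|-s)^{p-p/n}\, (-u_1^\star{}'(s))^{p-1} \le \mu_1(\Omega) \int_0^s |u_1^\star|^{p-2} u_1^\star\, d\tau,
\]
which is (1star) for $s \le |\Omega|/2$ (where the minimum equals $s$) and (2star) for $s > |\Omega|/2$ (where it equals $|\Omega|-s$).

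The main technical point to watch is the behavior at flat regions: where $u_1^\star$ is locally constant, $u_1^\star{}'(s) = 0$ and both estimates are trivial, while where $\{u_1 = t\}$ has positive $n$-measure the coarea identification of $-m'(t)$ must be interpreted with care (the singular part of $m$ only shrinks the left-hand side, so no harm is done). These are the standard nuisances of Talenti-type arguments and I expect them to be handled by passing to absolutely continuous representatives; the conceptual novelty is only the use of orthogonality in the sublevel-set branch.
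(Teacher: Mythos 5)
Your proof is correct and follows the same Talenti-type symmetrization strategy as the paper: obtain the level-set identity for $\int_{\{u_1=t\}}|Du_1|^{p-1}\,d\mathcal{H}^{n-1}$, combine coarea, H\"older's inequality and the relative isoperimetric inequality, then pass to the rearrangement variable. Your split at $t=0$ with the orthogonality rewriting on sublevel sets is, however, an unnecessary detour: since $g(u)=|u|^{p-2}u$ is increasing, the equimeasurability identity $\int_{\{u_1>t\}}|u_1|^{p-2}u_1\,dx=\int_0^{m(t)}|u_1^\star(\tau)|^{p-2}u_1^\star(\tau)\,d\tau$ holds for every $t\in\mathbb{R}$, so integrating over the superlevel set $\{u_1>t\}$ works uniformly (the paper derives the same identity rigorously via the truncation test functions $\phi_h$ rather than by applying the divergence theorem on level surfaces, which is the cleaner route for the $p$-Laplacian).
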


\begin{proof}
Fix $t \in \mathbb{R}$ and let $h>0$; we choose 
\begin{equation*}
\phi_h(x)=\left\{
\begin{array}{ll}
h & \mbox{if}\>\> u_1(x)>t+h \\ 
u_1(x)-t & \mbox{if}\>\> t < u_1(x) \le t+h \\ 
0 & \mbox{if}\>\> u_1(x) \le t
\end{array}
\right. 
\end{equation*}
as test function in \eqref{problem} and we obtain 
\begin{equation*}
\frac{1}{h}\int_{t<u_1\le
t+h}|Du_1|^p\,dx=\mu_1(\Omega)\int_{u_1>t+h}|u_1|^{p-2}u_1\,dx+\frac{
\mu_1(\Omega)}{h}\int_{t<u_1\le t+h}|u_1|^{p-2}u_1(u_1-t)\,dx. 
\end{equation*}
Letting $h \to 0^+$ we get that for almost every $t\in \mathbb{R}$ 
\begin{eqnarray*}
\frac{1}{h}\int_{t<u_1\le t+h}|Du_1|^p\,dx & \longrightarrow& -\frac{d}{dt}
\int_{u_1>t}|Du_1|^p\,dx \\
\int_{u_1>t+h}|u_1|^{p-2}u_1\,dx & \longrightarrow& \int_{u_1>t}
|u_1|^{p-2}u_1\,dx \\
\left| \frac{1}{h}\int_{t<u_1\le t+h}|u_1|^{p-2}u_1(u_1-t)\right| \,dx&\le&
\int_{t<u_1\le t+h} |u_1|^{p-1}\,dx \longrightarrow 0.
\end{eqnarray*}
Thus 
\begin{equation*}
-\frac{d}{dt}\int_{u_1>t}|Du_1|^p\,dx =
\mu_1(\Omega)\int_{u_1>t}|u_1|^{p-2}u_1\,dx,\quad \mathrm{{for\> a.e.}\> t
\in \mathbb{R}. }
\end{equation*}
On the other hand, by co-area formula and H\"older inequality, it holds 
\begin{equation}  \label{u}
P_\Omega \left(\{u_1>t\}\right)=-\frac{d}{dt}\int_{u_1>t}|Du_1|\,dx \le
\left(-m^{\prime }(t)\right)^{1-1/p}\left(-\frac{d}{dt}\int_{u_1>t}|Du_1|^p
\,dx\right)^{1/p}.
\end{equation}
Let $t_0=\inf\left\{t \in \mathbb{R}:\> m(t) \le \frac{|\Omega|}{2}\right\}$
; by the very definition \eqref{reliso} of $K_n(\Omega)$ we get 
\begin{equation}  \label{p}
P_\Omega \left(\{u_1>t\}\right) \ge \left\{
\begin{array}{ll}
K_n(\Omega) m(t)^{1-1/n} & \mbox{if}\> t \ge t_0 \\ 
&  \\ 
K_n(\Omega) \left(|\Omega|-m(t)\right)^{1-1/n} & \mbox{if}\> t < t_0.
\end{array}
\right.
\end{equation}
From \eqref{u} and \eqref{p} we deduce 
\begin{eqnarray*}
\frac{K_n(\Omega)^p m(t)^{p-p/n}}{\left(-m^{\prime }(t)\right)^{p-1}} &\le&
\mu_1(\Omega) \int_{u_1>t} |u_1|^{p-2}u_1\,dx \qquad \mbox{for a.e.}\> t\ge
t_0 \\
\frac{K_n(\Omega)^p \left(|\Omega|-m(t)\right)^{p-p/n}}{\left(-m^{\prime
}(t)\right)^{p-1}} &\le& \mu_1(\Omega) \int_{u_1>t} |u_1|^{p-2}u_1\,dx
\qquad \mbox{for a.e.}\> t<t_0,
\end{eqnarray*}
that is the claim since $u_1^\star$ is the generalized right-continuous
inverse of $m$.
\end{proof}

Now, let $\tilde{s}=\inf \{s\in (0,|\Omega |):\>u_{1}^{\star }(s)\leq 0\}$.  From \eqref{1star}
we deduce that the function 
\begin{equation}
U(s)=\displaystyle\int_{0}^{s}(u_{1}^{\star }(t))^{p-1}\,dt
\label{U}
\end{equation}
satisfies 
\begin{equation}
\left\{
\begin{array}{ll}
-\left(U'(s)^{1/(p-1)}\right)'\le \dfrac{\mu_1(\Omega)}{K_n(\Omega)^p}\>s^{-p+p/n} \>U(s)^{1/(p-1)}, \qquad s \in (0,\tilde s)
\\ \\
U(0)=U'(\tilde s)=0.
\end{array}
\right.  \label{uu}
\end{equation}
Let $L>0$ be such that $\dfrac{\mu _{1}(\Omega )}{K_{n}(\Omega )^{p}}$
coincides with the first eigenvalue $\sigma _{1}(0,L)$ of the following
Sturm-Liouville problem 
\begin{equation}\label{vv}
\left\{
\begin{array}{ll}
-\left(V'(s)^{1/(p-1)}\right)' = \sigma s^{-p+p/n} \>V(s)^{1/(p-1)}, \qquad s \in (0,L)
\\ \\
V(0)=V'(L)=0.
\end{array}
\right.
\end{equation}
We explicitly observe that such an $L$ always exists. Indeed, let us
consider the following Dirichlet eigenvalue problem 
\begin{equation}
\left\{ 
\begin{array}{ll}
-\Delta _{p}v=\lambda |v|^{p-2}v & \mbox{in}\>B_{R} \\ 
&  \\ 
v=0 & \mbox{on}\>\partial B_{R},
\end{array}
\right.   \label{dirichlet}
\end{equation}
where $B_{R}$ is the ball centered at the origin, having radius $R$. It is
well-known that the first eigenvalue $\lambda _{1}(B_{R})$ of 
\eqref{dirichlet} is simple and that a corresponding eigenfunction does not
change sign in $B_{R}$ (see \cite{L}). Since $\lambda _{1}(B_{R})=\lambda
_{1}(B_{1})R^{-p}$, there exists a unique $R$, say $R=\bar{R}$, such that 
\begin{equation}
\lambda _{1}(B_{\bar{R}})=\left( \dfrac{n\omega _{n}^{1/n}}{K_{n}(\Omega )}
\right) ^{p}\mu _{1}(\Omega ),  \label{l1}
\end{equation}
that is 
\begin{equation}
\bar{R}=\frac{K_{n}(\Omega )}{n\omega _{n}^{1/n}}\left( \frac{\lambda
_{1}(B_{1})}{\mu _{1}(\Omega )}\right) ^{1/p}.  \label{bar_R}
\end{equation}
Now let $v_{1}$ be a positive eigenfunction corresponding to $\lambda
_{1}(B_{\bar{R}})$; it can be proven (see for instance \cite{AFT}) that 
\begin{equation}
V(s)=\displaystyle\int_{0}^{s}(v_{1}^{\star }(t))^{p-1}\,dt  \label{V}
\end{equation}
satisfies \eqref{vv} with $\sigma =\dfrac{\mu _{1}(\Omega )}{K_{n}(\Omega
)^{p}}$ as claimed. Obviously 
\begin{equation*}
L=\omega _{n}{\bar{R}}^{n}=\left( \frac{K_{n}(\Omega )}{n}\right) ^{n}\left( 
\frac{\lambda _{1}(B_{1})}{\mu _{1}(\Omega )}\right) ^{n/p}.
\end{equation*}
We finally observe that problem \eqref{vv} belongs to the class considered
in Lemma \ref{lemma}; it is enough to choose $\gamma =\frac{p}{p-1}$, $\beta
=\frac{p}{p-1}\left( 1-\frac{1}{n}\right) $, $A=L$.

\begin{lemma}\label{L=s}
Let $u_1$ be an eigenfunction corresponding to $\mu_1(\Omega)$ and let $U$, $V$ be defined in \eqref{U}, \eqref{V}.
Then
\begin{equation}
\label{elle}
L\le {\rm min} \left\{{\tilde s}, |\Omega| - {\tilde s}, \dfrac{|\Omega|}{2}\right \};
\end{equation}
moreover

\begin{itemize}

\item[\textit{(i)}] If $L=\tilde s$, then there exists a constant $C\in \R\setminus \{0\}$ such that $U(s)=CV(s)$ and $u_1^\star(s)=Cv_1^\star(s)$, for any $s \in [0,L]$;

\item[\textit{(ii)}] if $L=\frac{|\Omega|}{2}$, then $\tilde s=\frac{|\Omega|}{2}$, $U$ and $V$ are proportional and $u_1^\star=(-u_1)^\star$.
\end{itemize}
\end{lemma}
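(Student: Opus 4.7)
The plan is to prove $L\le\tilde s$ by a Chiti-type comparison between $U$ (which satisfies the one-sided differential inequality \eqref{uu} on $(0,\tilde s)$ together with the boundary conditions $U(0)=0=U'(\tilde s)$) and $V$ (which solves the Sturm-Liouville problem \eqref{vv} on $(0,L)$ with eigenvalue $\mu_1(\Omega)/K_n(\Omega)^p=\sigma_1(0,L)$); once this bound is obtained, the other two inequalities in \eqref{elle} follow from the normalization $\tilde s\le|\Omega|/2$ fixed at the start of the section, since then $|\Omega|-\tilde s\ge|\Omega|/2\ge L$.

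For the comparison itself, I would multiply \eqref{uu} by a suitable power of $U$, integrate by parts on $(0,\tilde s)$ using both boundary conditions, and invoke the variational characterization of $\sigma_1(0,\tilde s)$ from Lemma \ref{lemma} (with $\gamma=p/(p-1)$ and $\beta=p/(p-1)(1-1/n)$) to conclude
\[
\sigma_1(0,\tilde s) \le \frac{\mu_1(\Omega)}{K_n(\Omega)^p} = \sigma_1(0,L).
\]
Strict monotonicity of $\sigma_1(0,\cdot)$ in the interval length — obtained by extending an admissible test function on a shorter interval by its endpoint value to a longer one, which strictly lowers the Rayleigh quotient — then forces $\tilde s\ge L$.

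For the rigidity, equality $L=\tilde s$ saturates the chain above, so $U$ realizes the minimum of the Rayleigh quotient for $\sigma_1(0,L)$. By simplicity of that first eigenvalue (a consequence of Lemma \ref{lemma} and a standard Krein-Rutman-type argument for \eqref{vv}), $U=CV$ for some $C\neq 0$, and differentiating together with $U'=(u_1^\star)^{p-1}$, $V'=(v_1^\star)^{p-1}$ yields $u_1^\star=Cv_1^\star$ (with a relabelled constant), establishing (i). If instead $L=|\Omega|/2$, then $L\le\tilde s\le|\Omega|/2=L$ forces $\tilde s=|\Omega|/2$, and the same Chiti argument applied to the negative-part data $z(\tau):=-u_1^\star(|\Omega|-\tau)$ and $Z(\tau):=\int_0^\tau z(u)^{p-1}\,du$ on $(0,|\Omega|/2)$ (which satisfy a mirrored copy of \eqref{uu}, thanks to the Neumann orthogonality $\int_0^{|\Omega|}|u_1^\star|^{p-2}u_1^\star\,dt=0$ that translates into $U(L)=Z(L)$) is likewise saturated; hence $Z=C'V$, the identity $U(L)=Z(L)$ pins $C=C'$, and $U=Z$ on $[0,L]$ rewrites as $u_1^\star(s)=-u_1^\star(|\Omega|-s)$, which is precisely $u_1^\star=(-u_1)^\star$.

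The step I expect to be most delicate is extracting from the scalar one-sided bound \eqref{uu} a clean variational inequality of the form required by Lemma \ref{lemma}. Identifying the correct multiplier and integration-by-parts scheme that relate the weight $s^{-p+p/n}$ appearing in \eqref{uu} to the weight $s^{-\beta}$ with $\beta=p/(p-1)(1-1/n)$ in the Rayleigh quotient will require care, as will ensuring that equality in the Chiti comparison forces genuine proportionality $U=CV$ rather than merely the agreement of eigenvalues.
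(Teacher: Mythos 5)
Your proof is essentially correct and follows the same backbone as the paper's: the Rayleigh-quotient comparison showing $\sigma_1(0,\tilde s)\le\mu_1(\Omega)/K_n(\Omega)^p=\sigma_1(0,L)$ together with strict monotonicity of $\sigma_1(0,\cdot)$ gives $L\le\tilde s$, and simplicity of $\sigma_1(0,L)$ gives the rigidity in (i). Two remarks on where you diverge from the paper.

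First, for the remaining two inequalities in \eqref{elle} you invoke the normalization $\tilde s\le|\Omega|/2$ and say it is ``fixed at the start of the section.'' In fact the paper only adopts that normalization \emph{after} Lemma \ref{L=s}; the lemma itself is stated for an arbitrary eigenfunction $u_1$, and its proof deliberately avoids the normalization. The paper proves $L\le|\Omega|-\tilde s$ by running the same Rayleigh-quotient argument with $w=-u_1$ (whose $\tilde s$-value is $|\Omega|-\tilde s$), and then \emph{adds} the two inequalities $L\le\tilde s$ and $L\le|\Omega|-\tilde s$ to get $2L\le|\Omega|$, i.e.\ $L\le|\Omega|/2$. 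Your shortcut proves a weaker (normalized) form of \eqref{elle}, which happens to be all that is used later; it is not wrong, but it is a genuine deviation, and the phrase ``fixed at the start of the section'' should be corrected.

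Second, your treatment of (ii) is more detailed than the paper's (which just asserts it is ``an immediate consequence of (i) together with $\int_0^{|\Omega|}|u_1^\star|^{p-2}u_1^\star\,dt=0$''), but the mechanism you describe — apply (i) to $z(\tau)=-u_1^\star(|\Omega|-\tau)=(-u_1)^\star(\tau)$, then use the orthogonality $U(L)=Z(L)$ to match the proportionality constants — is exactly the right one. One small clarification: $z,Z$ satisfy a mirrored copy of \eqref{uu} because \eqref{1star} applies to $w=-u_1$, not because of the orthogonality; the orthogonality is used only to obtain $U(L)=Z(L)$. Finally, the ``delicate'' step you flag (matching the weight $s^{-p+p/n}$ to $s^{-\beta}$) is in fact immediate once one observes that the correct exponent for Lemma \ref{lemma} is $\beta=p(1-1/n)$ (the paper's displayed $\beta=\tfrac{p}{p-1}(1-\tfrac1n)$ appears to be a misprint): multiplying \eqref{uu} by $U$, integrating by parts and using $U(0)=U'(\tilde s)=0$ produces precisely the Rayleigh quotient with that weight.
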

\begin{proof} 
We firstly prove that $L \leq \tilde s$.
Assume by contradiction that $L > \tilde s$. Then using $U$ as test function in \eqref{uu} we get

\begin{equation}
 \sigma_1(0,\tilde s) \le \frac{\dint_0^{\tilde s}(U'(s))^{\frac{p}{p-1}}ds}{\dint_0^{\tilde s}U(s)^{\frac{p}{p-1}}s^{-p+p/n}ds} \le \frac{\mu_1(\Omega)}{K_n(\Omega)^p}= \sigma_1(0,L),
\end{equation}
which is absurd. 
Now let us show that $L\leq \left\vert \Omega \right\vert -\tilde{s}$.
To this aim note that $w\equiv -u_{1}$ is an eigenfunction corresponding to $\mu_1(\Omega)$
such that $w^{\star }(\left\vert \Omega \right\vert -\tilde{s})=0$. At this point it suffices to repeat the above arguments with $w$ in place
of $u_{1}.$

Summing up the inequalities $L\leq \tilde{s}$ and $L\leq | \Omega |-%
\tilde{s} $ we deduce that $L\leq \left\vert \Omega \right\vert /2.$

$(i)$ Using $U$ and $V$ as test functions in \eqref{uu} and \eqref{vv} respectively, we get
$$
\frac{\dint_0^L (U'(s))^{\frac{p}{p-1}}ds}{\dint_0^LU(s)^{\frac{p}{p-1}}s^{-p+p/n}ds} \le \frac{\mu_1(\Omega)}{K_n(\Omega)^p}=\frac{\dint_0^L (V'(s))^{\frac{p}{p-1}}ds}{\dint_0^LV(s)^{\frac{p}{p-1}}s^{-p+p/n}ds}.
$$
Since $\sigma _{1}(0,L)$,  the first eigenvalue of problem \eqref{vv}, is simple and $L$ has been chosen such that
$\sigma _{1}(0,L)= \frac{\mu _{1}(\Omega )}{K_{n}(\Omega )^{p}}$,  $(i)$ follows.

\noindent $(ii)$ It is an immediate consequence of $(i)$ together with $\dint_0^{|\Omega|}|u_1^\star(t)|^{p-2}u_1^\star(t)\,dt=0.$
\end{proof}

From now on we can assume, without loss of generality, that $L \leq \tilde s \leq |\Omega|/2$.
Another step toward the reverse H\"older inequality is the following
comparison result. 

\begin{proposition}\label{prop2}
Let $u_1$ be an eigenfunction corresponding to $\mu_1(\Omega)$, $q>0$ and $v_{1,q}$ be  a positive eigenfunction of \eqref{dirichlet} corresponding to $\lambda
_{1}(B_{\bar{R}})$  such that   
$$
\displaystyle\int_0^{\tilde s}(u_1^\star(t))^q \,dt = \displaystyle
\int_0^{L} (v_{1,q}^\star(t))^q\,dt.$$
 Then
\begin{equation} 
\displaystyle \int_0^{ s}(u_1^\star(t))^q \,dt \le \displaystyle
\int_0^{s} (v_{1,q}^\star(t))^q\,dt, \qquad s \in [0,L]. \label{UV}\\
\end{equation}

\end{proposition}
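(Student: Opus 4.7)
The plan is a Chiti-type comparison, argued by contradiction by playing $U$ against a suitably rescaled $V$.

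First, I would reduce to a single scaling. Since the Dirichlet eigenvalue $\lambda_1(B_{\bar R})$ is simple, every positive eigenfunction of \eqref{dirichlet} is a positive multiple of $v_1$, so $v_{1,q}^\star = c\, v_1^\star$ for a unique $c>0$, the value fixed by the $L^q$-balance $\int_0^{\tilde s}(u_1^\star)^q\,dt = \int_0^L(v_{1,q}^\star)^q\,dt$. Consequently $V_q(s):=\int_0^s(v_{1,q}^\star)^{p-1}\,dt = c^{p-1}V(s)$ still solves \eqref{vv} with the same $\sigma = \mu_1(\Omega)/K_n(\Omega)^p$.

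Second, I would introduce the defect
$$\Phi(s) := \int_0^s(u_1^\star)^q\,dt - \int_0^s(v_{1,q}^\star)^q\,dt, \qquad s\in[0,L],$$
so that \eqref{UV} is exactly $\Phi\le 0$ on $[0,L]$. Clearly $\Phi(0)=0$; since $L\le \tilde s$ by Lemma \ref{L=s},
$$\Phi(L) = \int_0^L(u_1^\star)^q\,dt - \int_0^{\tilde s}(u_1^\star)^q\,dt = -\int_L^{\tilde s}(u_1^\star)^q\,dt \le 0.$$

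Third, I would suppose toward a contradiction that $\Phi(s_0)>0$ for some $s_0\in(0,L)$. By continuity one extracts $0\le s_1<s_2\le L$ such that $\Phi(s_1)=\Phi(s_2)=0$ and $\Phi>0$ on $(s_1,s_2)$. The one-sided signs $\Phi'(s_1^+)\ge 0$ and $\Phi'(s_2^-)\le 0$ give $u_1^\star(s_1)\ge v_{1,q}^\star(s_1)$ and $u_1^\star(s_2)\le v_{1,q}^\star(s_2)$, and continuity plus monotonicity of the two rearrangements force a crossing $s^*\in[s_1,s_2]$ with $u_1^\star(s^*)=v_{1,q}^\star(s^*)$.

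Fourth, the heart of the proof. I would construct a competitor $\Psi\in\mathcal{W}$ in the variational problem defining $\sigma_1(0,L)$ by recombining $U$ and $V_q$ across the crossing $s^*$ in the cleanest way compatible with $\Psi(0)=0$, and then compute its Rayleigh quotient. Multiplying the ODE inequality \eqref{uu} for $U$ and the ODE equality \eqref{vv} for $V_q$ by appropriate pieces of $\Psi$, integrating by parts, and using the crossing identity $u_1^\star(s^*)=v_{1,q}^\star(s^*)$ together with $\Phi(s_1)=\Phi(s_2)=0$ to clear boundary terms, I expect to obtain
$$\frac{\int_0^L|\Psi'|^{\gamma}\,ds}{\int_0^L|\Psi|^{\gamma}s^{-\beta}\,ds} \;<\; \sigma_1(0,L) \;=\; \frac{\mu_1(\Omega)}{K_n(\Omega)^p},$$
with $\gamma,\beta$ as in the Sturm--Liouville setup. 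This contradicts the variational characterization of $\sigma_1(0,L)$ supplied by Lemma \ref{lemma}, so \eqref{UV} must hold.

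The principal obstacle is this fourth step. The competitor has to lie in $\mathcal{W}$, the strict ODE inequality for $U$ (rather than mere equality) has to translate into a \emph{strict} gain in the Rayleigh quotient, and every boundary term produced by the integration by parts has to be disposed of by either the boundary data $U(0)=V_q(0)=0$, $V_q'(L)=0$, the matching $U'(\tilde s)=0$, or the crossing information at $s^*$ combined with the vanishing of $\Phi$ at $s_1,s_2$. Once this delicate bookkeeping is set up, the contradiction and hence \eqref{UV} follow at once.
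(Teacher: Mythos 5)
Your outline captures the Chiti spirit of the comparison, but the fourth step has a genuine gap that the paper handles quite differently, and the difficulty is more serious than a matter of bookkeeping. The obstruction is that the Sturm--Liouville/Rayleigh-quotient structure lives in the $(p-1)$-th power integrals $U(s)=\int_0^s(u_1^\star)^{p-1}$ and $V(s)=\int_0^s(v^\star)^{p-1}$ (these are what solve \eqref{uu} and \eqref{vv}), whereas your defect $\Phi$ is built from $q$-th power integrals. The crossing you extract is $u_1^\star(s^*)=v_{1,q}^\star(s^*)$, which tells you $U'(s^*)=V_q'(s^*)$, but it gives no control on the sign of $U(s^*)-V_q(s^*)$. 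Without that sign you cannot decide how to recombine $U$ and $V_q$ into a competitor $\Psi$, nor can you clear the boundary terms the way you describe. For $q\ne p-1$ the $q$-th power defect and the $(p-1)$-th power energies are simply different objects, and there is no direct route from the crossing of the $q$-defect to a test function for $\sigma_1(0,L)$. There is also a strictness problem: $\sigma_1(0,L)$ was chosen to \emph{equal} $\mu_1(\Omega)/K_n(\Omega)^p$, and the inequality \eqref{uu} only goes the wrong way when tested on $[0,L]$, because the boundary term $U'(L)^{1/(p-1)}U(L)\ge 0$ works against you rather than for you.

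The paper's actual proof resolves both issues by a two-stage argument. First, it treats $q=p-1$: assume $U-V_{p-1}$ attains a positive interior maximum at $s_1$, take $s_0$ the last zero before $s_1$, and test \eqref{uu}, \eqref{vv} on $(s_0,s_1)$ with the Picone-type functions $\Phi_1=(U^{p/(p-1)}-V_{p-1}^{p/(p-1)})/U^{1/(p-1)}$ and $\Phi_2=(U^{p/(p-1)}-V_{p-1}^{p/(p-1)})/V_{p-1}^{1/(p-1)}$, exploiting $\Phi_1(s_0)=\Phi_2(s_0)=0$ and $U'(s_1)=V_{p-1}'(s_1)$ (from the maximum) to kill boundary terms; the convexity of $t\mapsto t^{p/(p-1)}$ then gives the contradiction. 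Second, for general $q$ it takes $s_1$ to be the interior maximizer of $U_q-V_q$, so that $u_1^\star(s_1)=v_{1,q}^\star(s_1)$, re-runs the $q=p-1$ argument on $[0,s_1]$ (this is legitimate because at $s_1$ both the $(p-1)$-defect derivative \emph{and} the $q$-defect derivative vanish), and then upgrades the $(p-1)$-integral comparison to a pointwise inequality $u_1^\star\le v_{1,q}^\star$ on $[0,s_1]$ via the ODE/inequality pair and the matching at $s_1$, which forces $U_q\le V_q$ on $[0,s_1]$ and contradicts the assumed maximum. This bootstrap from $p-1$ to general $q$ is exactly the ingredient missing from your sketch.
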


\begin{proof}
We can assume that  $L<\tilde s$, since by Lemma \ref{L=s}, part $(i)$, the proposition becomes trivial when $L=\tilde s$.
We first prove \eqref{UV} when $q=p-1$. Denote $U(s)$ as before in \eqref{U} and introduce the function
$$
V_{p-1}(s)=\int_0^{s} (v_{1,p-1}^\star(t))^{p-1}\,dt.
$$
We claim that 
\begin{equation}\label{pippi}
U(s) \le V_{p-1}(s),\qquad s \in [0,L].
\end{equation}
Clearly \eqref{pippi} is fulfilled for $s=0$ or $s=L$. Now, assume by contradiction that there exists $s_{1}\in (0,L)$ such that
\begin{equation*}
U(s_{1})-V_{p-1}(s_{1})=\max_{s \in\left( 0,L\right) }\left( U(s)-V_{p-1}(s)\right) >0.
\end{equation*}
Let
\begin{equation*}
s_{0}=\inf \left\{ t\in (0,L):U(t)-V_{p-1}(t)>0\text{ \ }\forall t\in
(s_{0},s_{1})\right\} .
\end{equation*}
Consider now the functions
\begin{equation*}
\Phi _{1}(s)=\frac{U(s)^{\frac{p}{p-1}}-V_{p-1}(s)^{\frac{p}{p-1}}}{U(s)^{\frac{1}{%
p-1}}}  \label{phiU}
\end{equation*}
and
\begin{equation*}
\Phi _{2}(s)=\frac{U(s)^{\frac{p}{p-1}}-V_{p-1}(s)^{\frac{p}{p-1}}}{V_{p-1}(s)^{\frac{1}{%
p-1}}}.  \label{phiV}
\end{equation*}
Multiplying  (\ref{uu}) by $\Phi _{1}(s)$ and (\ref{vv}) by $\Phi _{2}(s)$
 respectively (note that $\Phi _{1}(s)$ and $\Phi _{2}(s)$
 are positive when $s \in (s_0,s_1)$)  and then subtracting, we get
\begin{equation*}
\int_{s_{0}}^{s_{1}}\left[-u_{1}^{\star }(s)^{\prime }\,\Phi
_{1}(s)+v_{1,p-1}^{\star }(s)^{\prime }\,\Phi
_{2}(s)\right]ds\leq 0.
\end{equation*}
It can be easily checked that
\begin{equation*}
\Phi_1(s_1)-\Phi_2(s_1)=\left(U(s_1)^{\frac{p}{p-1}}-V_{p-1}(s_1)^{\frac{p}{p-1}}\right)\left(U(s_1)^{\frac{1}{1-p}}-V_{p-1}(s_1)^{\frac{1}{1-p}}\right)<0.
\end{equation*}
Hence, since  $\Phi_1(s_0)=\Phi_2(s_0)=0$ and $(u_1^\star(s_1))^{p-1}=U'(s_1)=V_{p-1}'(s_1)=(v_{1,p-1}^\star(s_1))^{p-1}$, an integration by part yields
\begin{equation*} 
\int_{s_{0}}^{s_{1}}\left[u_{1}^{\star }(s)\Phi_{1}'(s)-v_{1,p-1}^{\star }(s)\Phi _{2}'(s)\right]ds \le u_1^\star(s_1)\left(\Phi_1(s_1)-\Phi_2(s_1)\right)<0.
\end{equation*}
We will get a contradiction by showing that 
\begin{equation}\label{i}
I:=\int_{s_{0}}^{s_{1}}\left[u_{1}^{\star }(s)\Phi_{1}'(s)-v_{1,p-1}^{\star }(s)\Phi _{2}'(s)\right]ds\ge 0.
\end{equation}
Indeed, setting 
$$
\tilde U(s)=\left(\frac{u_1^\star(s)}{U(s)}\right)^{\frac{1}{p-1}} \quad \mbox{and}\quad \tilde V_{p-1}(s)=\left(\frac{v_{1,p-1}^\star(s)}{V_{p-1}(s)}\right)^{\frac{1}{p-1}}
$$
a straightforward calculation gives
\begin{eqnarray*}
&I&=\dint_{s_0}^{s_1}\left[(u_1^\star(s))^{p-1}-\frac{p}{p-1}\left(\frac{V_{p-1}(s)}{U(s)}\right)^{\frac{1}{p-1}}u_1^\star(s)(v_{1,p-1}^\star(s))^{p-1}+\frac{1}{p-1}\left(\frac{V_{p-1}(s)}{U(s)}\right)^{\frac{p}{p-1}}(u_1^\star(s))^p+\right.
\\
&+& \left. (v_{1,p-1}^\star(s))^{p-1}-\frac{p}{p-1}\left(\frac{U(s)}{V_{p-1}(s)}\right)^{\frac{1}{p-1}}v_{1,p-1}^\star(s)(u_1^\star(s))^{p-1}+\frac{1}{p-1}\left(\frac{U(s)}{V_{p-1}(s)}\right)^{\frac{p}{p-1}}(v_{1,p-1}^\star(s))^p \right]ds
\\
&=&\dint_{s_0}^{s_1}\left\{U(s)^{\frac{p}{p-1}}\left[{\tilde U(s)}^{\frac{p}{p-1}}-{\tilde V_{p-1}(s)}^{\frac{p}{p-1}}-\frac{p}{p-1}{\tilde V_{p-1}(s)}^{\frac{1}{p-1}}\left(\tilde U(s)- \tilde V_{p-1}(s) \right)\right]+\right.
\\
&+& \left.V_{p-1}(s)^{\frac{p}{p-1}}\left[{\tilde V_{p-1}(s)}^{\frac{p}{p-1}}-{\tilde U(s)}^{\frac{p}{p-1}}-\frac{p}{p-1}{\tilde U(s)}^{\frac{1}{p-1}}\left(\tilde V_{p-1}(s)- \tilde U(s) \right)\right]\right\}ds.
\end{eqnarray*}
The convexity of the function $g(t)=t^{\frac{p}{p-1}}$, $t \ge 0$, ensures that the quantities in the square brackets in the last integral are nonnegative. Therefore the inequality in \eqref{i} is satisfied and finally \eqref{pippi} holds.

Now let $0<q \neq p-1$. Denote
$$U_q(s)= \displaystyle \int_0^{ s}(u_1^\star(t))^q \,dt,\qquad V_q(s)= \displaystyle \int_0^{s} (v_{1,q}^\star(t))^q\,dt.$$
Our aim is to prove that 
$$U_q(s) \le V_q(s) \quad s \in [0,L].$$
As before such an inequality is fulfilled in $s=0$ and in $s=L$.
Assume by contradiction that there exists $s_{1}\in (0,L)$ such that
\begin{equation}
\label{UqVq}
U_q(s_{1})-V_q(s_{1})=\max_{s \in\left( 0,L\right) }\left( U_q(s)-V_q(s)\right) >0.
\end{equation}
Since $s_1 \in (0,L)$, it holds that $U_q'(s_1)=V_q'(s_1)$, that is
\begin{equation}
\label{deriv}
u_1^\star(s_1)= v_{1,q}^\star(s_1).
\end{equation}
Arguing as in the proof of \eqref{pippi}, using \eqref{deriv}, we can prove that 
$$\displaystyle \int_0^{ s}(u_1^\star(t))^{p-1} \,dt \leq  \displaystyle \int_0^{s} (v_{1,q}^\star(t))^{p-1}\,dt,\qquad s \in [0,s_1].$$
This estimate, together with  \eqref{uu}, \eqref{vv} and \eqref{deriv}, gives
$$u_1^\star(s) \leq v_{1,q}^\star(s), \qquad s \in [0,s_1]$$
 and hence  
$U_q(s)\le V_q(s)$ for $s \in [0,s_1],$ which is a contradiction with\eqref{UqVq}.
\end{proof}

Note that the functions $\Phi_{1,2}$ appearing in the proof of the above theorem were also used for example in \cite{L, AFT,BT}.

\begin{theorem}[Reverse H\"older inequality]
\label{reverseH} 
Let $u_1$ be an eigenfunction corresponding to $\mu_1(\Omega)$ and $0<r<q$. There exists a positive constant 
$C=C(n,p,q,r,\mu_1(\Omega),\alpha)$ such that 
\begin{equation}  \label{reverse}
||u_1^+||_{L^q(\Omega)}\le C ||u_1^+||_{L^r(\Omega)}.
\end{equation}
Actually 
\begin{equation*}
C=\frac{||v_1||_{L^q(B_{\bar R})}}{||v_1||_{L^r(B_{\bar R})}},
\end{equation*}
where $v_1$ is any eigenfunction of problem \eqref{dirichlet} in $B_{\bar R}$
corresponding to $\lambda_1(B_{\bar R})$ (see \eqref{l1}, \eqref{bar_R}).
\end{theorem}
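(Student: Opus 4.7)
The plan is to invoke Proposition \ref{prop2} with the same exponent $q$ appearing on the left-hand side, so that the $L^q$-norm of $u_1^+$ is already pinned down, and then convert the resulting integral-comparison inequality into a comparison of $L^r$-norms via a Hardy--Littlewood--Polya majorization argument. To this end, let $v_{1,q}$ denote the positive eigenfunction of \eqref{dirichlet} provided by Proposition \ref{prop2}, normalized so that $\int_0^{\tilde s}(u_1^\star)^q\,dt=\int_0^L(v_{1,q}^\star)^q\,dt$. Since $(u_1^+)^\star$ coincides with $u_1^\star$ on $(0,\tilde s)$ and vanishes on $(\tilde s,|\Omega|)$, this normalization already says $\|u_1^+\|_{L^q(\Omega)}=\|v_{1,q}\|_{L^q(B_{\bar R})}$.

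Next, I extend $v_{1,q}^\star$ by zero on $(L,\tilde s]$. For $s\in[0,L]$ Proposition \ref{prop2} gives $\int_0^s(u_1^\star)^q\,dt\le\int_0^s(v_{1,q}^\star)^q\,dt$. For $s\in[L,\tilde s]$ the right-hand side is the constant $\int_0^L(v_{1,q}^\star)^q\,dt=\int_0^{\tilde s}(u_1^\star)^q\,dt$, which dominates $\int_0^s(u_1^\star)^q\,dt$ because $u_1^\star\ge 0$ on $(0,\tilde s)$. Hence
\[
\int_0^s(u_1^\star(t))^q\,dt\le\int_0^s(v_{1,q}^\star(t))^q\,dt,\qquad s\in[0,\tilde s],
\]
with equality at $s=\tilde s$. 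The functions $F(s)=(u_1^\star(s))^q$ and $G(s)=(v_{1,q}^\star(s))^q$ are nonnegative and nonincreasing on $[0,\tilde s]$, and $F$ is majorized by $G$ in the Hardy--Littlewood sense. Applying the classical majorization inequality with the concave nondecreasing function $\psi(t)=t^{r/q}$ (for which concavity requires precisely $0<r<q$), one obtains
\[
\int_0^{\tilde s}(u_1^\star)^r\,ds\;\ge\;\int_0^{\tilde s}(v_{1,q}^\star)^r\,ds\;=\;\int_0^L(v_{1,q}^\star)^r\,ds,
\]
that is $\|u_1^+\|_{L^r(\Omega)}\ge\|v_{1,q}\|_{L^r(B_{\bar R})}$.

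Combining the $L^q$ equality with the $L^r$ inequality gives $\|u_1^+\|_{L^q(\Omega)}/\|u_1^+\|_{L^r(\Omega)}\le \|v_{1,q}\|_{L^q(B_{\bar R})}/\|v_{1,q}\|_{L^r(B_{\bar R})}$. Because $\lambda_1(B_{\bar R})$ is simple, any positive eigenfunction $v_1$ is a positive scalar multiple of $v_{1,q}$, and the ratio of norms is invariant under such scaling, so the constant takes precisely the form stated. The only delicate point is the Hardy--Littlewood--Polya majorization step; if one prefers a self-contained derivation, it can be obtained by integration by parts against the nonnegative function $H(s)=\int_0^s[(v_{1,q}^\star)^q-(u_1^\star)^q]\,dt$, using the concavity of $t\mapsto t^{r/q}$ together with the boundary conditions $H(0)=H(\tilde s)=0$.
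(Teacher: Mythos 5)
Your proof is correct and follows essentially the same strategy as the paper: apply Proposition \ref{prop2}, invoke Hardy--Littlewood--P\'olya majorization, and finish by using simplicity of $\lambda_1(B_{\bar R})$ to replace $v_{1,q}$ by an arbitrary positive eigenfunction. The one real difference is the direction of the majorization step. You fix the normalization at the $L^q$ level, so that $\|u_1^+\|_{L^q}=\|v_{1,q}\|_{L^q}$ is immediate, and then \emph{descend} to $L^r$ by applying the concave power $\psi(t)=t^{r/q}$ to the majorized pair $\bigl((u_1^\star)^q,(v_{1,q}^\star)^q\bigr)$, obtaining $\|u_1^+\|_{L^r}\ge\|v_{1,q}\|_{L^r}$. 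The paper instead normalizes at the $L^r$ level, $\|u_1^+\|_{L^r}=\|v_{1,r}\|_{L^r}$, and \emph{ascends} to $L^q$ via the convex power $\phi(t)=t^{q/r}$, giving $\|u_1^+\|_{L^q}\le\|v_{1,r}\|_{L^q}$. The two routes are mirror images and give the identical constant; the paper's choice is slightly cleaner at the technical level because $\phi'(t)=\tfrac{q}{r}t^{q/r-1}$ is continuous at $t=0$ whereas $\psi'(t)=\tfrac{r}{q}t^{r/q-1}$ blows up there, so your suggested self-contained integration by parts against $H(s)=\int_0^s[(v_{1,q}^\star)^q-(u_1^\star)^q]\,dt$ needs a small approximation or cut-off near the set where $u_1^\star$ vanishes. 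That is routine but worth doing carefully if you want the argument fully explicit rather than citing the standard concave form of HLP majorization.
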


\begin{proof}
Let $v_{1,r}$ be the eigenfunction of problem \eqref{dirichlet} corresponding to $\lambda_1(B_{\bar R})$ satisfying 
$$||v_{1,r}||_{L^r(B_{\bar R})}=||u_1^+||_{L^r(\Omega)}.$$ We define $v_{1,r}^\star(s)=0$ for $s \in [L,\tilde s]$. Proposition \ref{prop2} immediately implies
$$
\int_0^s (u_1^\star(t))^r\,dt \le \int_0^s(v_{1,r}^\star(t))^r\, dt, \qquad s\in [0,\tilde s], \qquad\mbox{and}\qquad \int_0^{\tilde s}(u_1^\star(t))^r\,dt=\int_0^{\tilde s}(v_{1,r}^\star(t))^r\,dt.
$$
By well-known properties of rearrangements (see for instance \cite{ALT}) we get
$$
||u_1^+||_{L^q(\Omega)}=\int_0^{\tilde s}(u_1^\star(t))^q\,dt\le \int_0^{\tilde s} (v_{1,r}^\star(t))^q\,dt=||v_{1,r}||_{L^q(B_{\bar R})}.
$$
Finally 
\begin{equation*}
||u_1^+||_{L^q(\Omega)}\le || v_{1,r}||_{L^q(B_{\bar
R})}=||u_1^+||_{L^r(\Omega)} \frac{|| v_{1,r}||_{L^q(B_{\bar R})}}{|| v_{1,r}||_{L^r(B_{\bar R})}}= ||u_1^+||_{L^r(\Omega)} \frac{||
v_1||_{L^q(B_{\bar R})}}{||v_1||_{L^r(B_{\bar R})}}.
\end{equation*}
\end{proof}

\section{Proof of Theorem \ref{main} and comparison with previous results}

\subsection{Proof of Theorem \ref{main}}
Set $\Omega^+=\{x \in \Omega: \> u_1(x)>0\}$ and suppose as in Section 2 that $|\Omega^+| \le \frac{|\Omega|}{2}.$ From \eqref{reverse}
using H\"older inequality we have 
\begin{equation}  \label{omega}
|\Omega^+|^{1/q-1/r}\le \frac{||v_1||_{L^q(B_{\bar R})}}{||v_1||_{L^r(B_{
\bar R})}},
\end{equation}
where $v_1$ is as in Theorem \ref{reverseH}. We choose 
\begin{equation*}
v_1(x)=\Psi_p\left(\left(\frac{\mu_1(\Omega)}{\alpha}\right)^{1/p}|x|\right),
\end{equation*}
where 
\begin{equation*}
\alpha=\left(\frac{K_n(\Omega)}{n\omega_n^{1/n}}\right)^p 
\end{equation*}
and $\Psi_p(r)$ is the solution to the following Sturm-Liouville problem 
\begin{equation*}
\left\{\begin{array}{ll}
-(p-1)|\Psi_p'|^{p-2}\Psi_p''-\frac{n-1}{r}|\Psi_p'|^{p-1}=\Psi_p^{p-1} & \mbox{in}\> (0,\psi_{p})
\\ \\
\Psi_p'(0)=\Psi_p(\psi_p)=0,
\end{array}
\right.
\end{equation*}
normalized in such a way that $\Psi_p(0)=1$, where $\psi_p$ is the first
positive zero of $\Psi_p$. Clearly, in the linear case $p=2$, $\Psi_p(r)$
coincides with $r^{1-n/2}J_{n/2-1}(r)$ and $\psi_p$ is the first positive
zero $j_{n/2-1,1}$ of the Bessel function of the first kind $J_{n/2-1}$.
With this choice of $w_1$ we get that $\bar R=\psi_p\left(\frac{\alpha}{
\mu_1(\Omega)}\right)^{1/p}$ and \eqref{omega} becomes 
\begin{eqnarray}  \label{w}
\frac{||w_1||_{L^q(B_{\bar R})}}{||w_1||_{L^r(B_{\bar R})}} &=&\frac{
\left(n\omega_n\displaystyle\int_0^{\bar R} t^{n-1}\Psi_p\left(\left(\frac{
\mu_1(\Omega)}{\alpha}\right)^{1/p}t\right)^qdt\right)^{1/q}}{\left(n\omega_n
\displaystyle\int_0^{\bar R} t^{n-1}\Psi_p\left(\left(\frac{\mu_1(\Omega)}{
\alpha}\right)^{1/p}t\right)^rdt\right)^{1/r}} \\
&=&(n\omega_n)^{1/q-1/r}\left(\frac{\alpha}{\mu_1(\Omega)}
\right)^{n/(pq)-n/(pr)}\frac{\left(\displaystyle\int_0^{\psi_p}t^{n-1}
\Psi_p(t)^qdt\right)^{1/q}}{\left(\displaystyle\int_0^{\psi_p}t^{n-1}
\Psi_p(t)^rdt\right)^{1/r}},  \notag
\end{eqnarray}
that is 
\begin{equation}  \label{b}
\mu_1(\Omega) \ge \alpha\left(\frac{n\omega_n}{|\Omega^+|} \right)^{p/n}
\frac{\left(\displaystyle\int_0^{\psi_p}t^{n-1}\Psi_p(t)^rdt
\right)^{pq/n(q-r)}}{\left(\displaystyle\int_0^{\psi_p}t^{n-1}\Psi_p(t)^qdt
\right)^{pr/n(q-r)}}.
\end{equation}
Let 
\begin{equation}  \label{f}
f(s)=\left(\frac{\int_0^{\psi_p}t^{n-1}\Psi_p(t)^sdt}{\int_0^{
\psi_p}t^{n-1}dt}\right)^{1/s}=\left(\frac{n}{\psi_p^n}\int_0^{
\psi_p}t^{n-1}\Psi_p(t)^sdt\right)^{1/s};
\end{equation}
it is easy to prove that 
\begin{equation}  \label{sup}
\sup_{0<r<q}\left(\frac{f(r)}{f(q)}\right)^{pqr/n(q-r)}=1;
\end{equation}
recalling that $|\Omega^+| \le |\Omega|/2$, \eqref{b} implies 
\begin{equation*}
\mu_1(\Omega) \ge 2^{p/n} \alpha\> \frac{\psi_p^p}{\left(\frac{|\Omega|}{
\omega_n}\right)^{p/n}} 
\end{equation*}
and estimate \eqref{estimate} immediately follows.

It remains to prove that, when $n=p=2$, \eqref{estimate} is sharp. To this
aim let us consider the sequence of rhombi $\Omega _{m}$ having vertices $A_m
$, $B_m$, $C_m$, $D_m$ (see Figure 1), acute angles $\beta _{m} = \frac{2\pi 
}{m}$ ($m>4$) and sides with length one.

 In \cite{Ci} it is proved that 
\begin{equation*}
K_2(\Omega_m)=\sqrt{2\sin \beta_m} 
\end{equation*}
and hence 
\begin{equation*}
\alpha _{m}\equiv \left(\frac{K_2(\Omega_m)}{2\sqrt{\pi}}\right)^2= \frac{
\sin \beta _{m}}{2\pi }.  \label{alf_m}
\end{equation*}
Let $u_{m}$ be an eigenfunction corresponding to $\mu_1(\Omega_m)$. By
Proposition 4.1 in \cite{BCT} the nodal line of $u_{m}$ is the shortest
diagonal of the rhombus, the segment $\overline{B_mD_m}$, and $u_{m}$ is odd
with respect to $\overline{B_mD_m}$; let us denote by $T_{m}$ the triangle of vertices $A_m$, $B_m$, $D_m$. Clearly the restriction of $u_m$ to $T_{m}$ is
an eigenfunction corresponding to the first eigenvalue $\lambda_{1}^{DN}(T_m)$ of
the following  problem with mixed boundary conditions
\begin{equation*}  \label{triangolo}
\left\{
\begin{array}{ll}
-\Delta u_m =\lambda^{DN}u_m & \mbox{in}\,\, T_m \\ 
&  \\ 
u_m=0 & \mbox{on} \,\,\overline{B_mD_m} \\ 
&  \\ 
\dfrac{\partial u_m}{\partial \nu}=0 & \mbox{on}\,\, \partial T_m \setminus 
\overline{B_mD_m},
\end{array}
\right.
\end{equation*}
and $\lambda_{1}^{DN}(T_m)= \mu_1(\Omega_m)$. 
Let $S_{m}^{e}$ be the sector, centered at $A_m$, having radius one and opening angle $\beta_m$,
containing $T_m$, and let $S_{m}^{i}$ be the sector, centered at $A_m$, with opening angle $
\beta_m$, tangent to $T_m$ at the midpoint of the segment $\overline{B_mD_m}$ 
(see Figure 1).  

\noindent In what follows we mean with $\lambda_{1}^{DN}(S_{m}^{e})$  
($\lambda_{1}^{DN}(S_{m}^{i})$) 
the first eigenvalue of the Laplace operator in $S_{m}^{e}$ 
($S_{m}^{i}$)
with homogeneous Dirichlet boundary conditions on the segments $\overline{A_mB_m}$,  $\overline{A_mD_m}$ and homogeneous Neumann boundary conditions on the remaining part of 
$\partial S_{m}^{e}$  ($\partial S_{m}^{i}$). 
Any eigenfunction corresponding to $\lambda_{1}^{DN}(T_m)$ can be used as test function for $\lambda_{1}^{DN}(S_{m}^{e})$ after setting its value equal to zero on $S_{m}^{e} \setminus T_m$. In the same way we can use any eigenfunction corresponding to $\lambda_{1}^{DN}(S_{m}^{i})$ as test function in $\lambda_{1}^{DN}(T_m)$, and therefore
\begin{equation*}  \label{stima}
 j_{0,1}^2=
\lambda_{1}^{DN}(S_{m}^{e})
 \le \lambda_{1}^{DN}(T_m) \le 
\lambda_{1}^{DN}(S_{m}^{i})=
\frac{j_{0,1}^2}{\cos^2\left(\frac{\beta_m}{2}\right)},
\end{equation*}
 being $j_{0,1}$ the first positive zero of the Bessel function of the first
kind $J_0$. 
Hence 
\begin{equation*}
\lim_{m\to +\infty} \mu_1(\Omega_m)=\lim_{m\to +\infty} \lambda_{1}^{DN}(T_m) =
j_{0,1}^2.
\end{equation*}
On the other hand $\lambda_1(\Omega_m^\sharp)=\dfrac{\pi j_{0,1}^2}{\sin
\beta_m}$ and then 
\begin{equation*}
\lim_{m \to +\infty} \frac{\mu_1(\Omega_m)}{\alpha_m\lambda_1(\Omega_m^\sharp)}=2. 
\end{equation*}

\subsection{Comparison of estimate \eqref{estimate} with previous results available in literature}

 We begin by showing that  \eqref{estimate} improves the following bound for $p=2$
(see Corollay 3.1. in \cite{BCT}): 
\begin{eqnarray*}
\mu _{1}(\Omega)&\geq \dfrac{\alpha}{|\Omega|^{2/n}}  &\left(2n\omega _{n}\int_{0}^{j_{\frac{n}{2}-1,1}}s^{\frac{n}{2}}J_{\frac{n}{2}%
-1}\left( s\right) ds\right)^{2/n}\times
\\
&&\times\exp \left( \frac{\frac{2}{n}\int_{0}^{j_{\frac{n}{2}-1,1}}J_{%
\frac{n}{2}-1}\left( s\right) s^{\frac{n}{2}}\left[ \left( \frac{n}{2}%
-1\right) s-\log \left( J_{\frac{n}{2}-1}\left( s\right) \right) \right] ds}{%
\int_{0}^{j_{\frac{n}{2}-1,1}}s^{\frac{n}{2}}J_{\frac{n}{2}-1}\left(
s\right) ds}\right). 
\end{eqnarray*}
Such an inequality can be rewritten in terms of the function $f$ defined in \eqref{f}: 
\begin{equation*}
\mu_1(\Omega)\ge 2^{2/n}\>\alpha\>\left[ \sup_{1 \le q}\left(\frac{f(1)}{f(q)
}\right)^{2q/n(q-1)}\right]\frac{j_{n/2-1,1}^2}{\left(\frac{|\Omega|}{
\omega_n}\right)^{2/n}},
\end{equation*}
while \eqref{estimate} reads as 
\begin{equation*}
\mu_1(\Omega)\ge 2^{2/n}\>\alpha \>\left[\sup_{0<r<q}\left(\frac{f(r)}{f(q)}
\right)^{2qr/n(q-r)}\right]\frac{j_{n/2-1,1}^2}{\left(\frac{|\Omega|}{
\omega_n}\right)^{2/n}}.
\end{equation*}
Taking into account \eqref{sup} we get the claim.

\medskip

When $p \ge 2$, estimate \eqref{estimate} is better
than the one contained in \cite{A,AM}. In these last papers the authors
prove that 
\begin{equation*}
\mu_1(\Omega) \ge 2^{p/n}\left(\frac{n}{p(n-1)}\right)^p\frac{K_n(\Omega)^p}{
|\Omega|^{p/n}}. 
\end{equation*}
Inequality \eqref{estimate} can be also read as 
\begin{equation*}
\mu_1(\Omega) \ge 2^{p/n} \left(\frac{\psi_p}{n}\right)^p \frac{K_n(\Omega)^p
}{|\Omega|^{p/n}}. 
\end{equation*}
Hence, in order to get our claim, it is enough to verify that 
\begin{equation}\label{psi}
\psi_p >\frac{n^2}{p(n-1)}, \qquad p\ge 2,\quad n \ge 2. 
\end{equation}
In \cite{L1} (see also \cite{AB}) it is proved that 
\begin{equation*}  \label{lind}
q\,\psi_q\le p \,\psi_p, \qquad q \le p;
\end{equation*}
then, choosing $q=2$ in \eqref{lind} we get 
\begin{equation}  \label{ll}
\psi_p > \frac{2}{p} j_{n/2-1,1}.
\end{equation}
On the other hand in \cite{Lo} the author proves that 
\begin{equation}  \label{lorch}
j_{n/2-1,1}^2 > \frac{n}{2}\left(\frac{n}{2}+4\right).
\end{equation}
Gathering \eqref{ll} and \eqref{lorch} we immediately get \eqref{psi}.

\medskip

Finally consider the class $\mathcal{G}$ of planar, convex domains $\Omega$ that are symmetric about a point. A result contained in \cite{Ci} ensures that 
$$
K_2(\Omega)^2=\frac{2w(\Omega)^2}{|\Omega|}, \qquad \forall \Omega \in \mathcal{G},
$$
where $w(\Omega)$ stands for the width of $\Omega$. In such a class of domains our estimate \eqref{estimate} reads as

\begin{equation}
\label{e2}
\mu_1(\Omega) \ge   j_{0,1}^2 \frac{w(\Omega)^2}{|\Omega|^2},  \qquad \forall \Omega \in \mathcal{G}.
\end{equation} 
Then for any $\Omega \in \mathcal{G}$ such that
\begin{equation}\label{e3}
|\Omega|<C w(\Omega)d(\Omega), \quad \mbox{with}\>\> 0<C<\frac{j_{0,1}}{\pi},
\end{equation}
 estimate \eqref{e2} improves the classical Payne-Weinberger inequality \eqref{PW}. Indeed, \eqref{e2} and \eqref{e3} immediately imply
 $$
 \mu_1(\Omega)d(\Omega)^2 \ge  \frac{j_{0,1}^2}{C^2} > \pi^2.
 $$

\bigskip
\begin{figure}[h]
\centering
\includegraphics[scale=0.6]{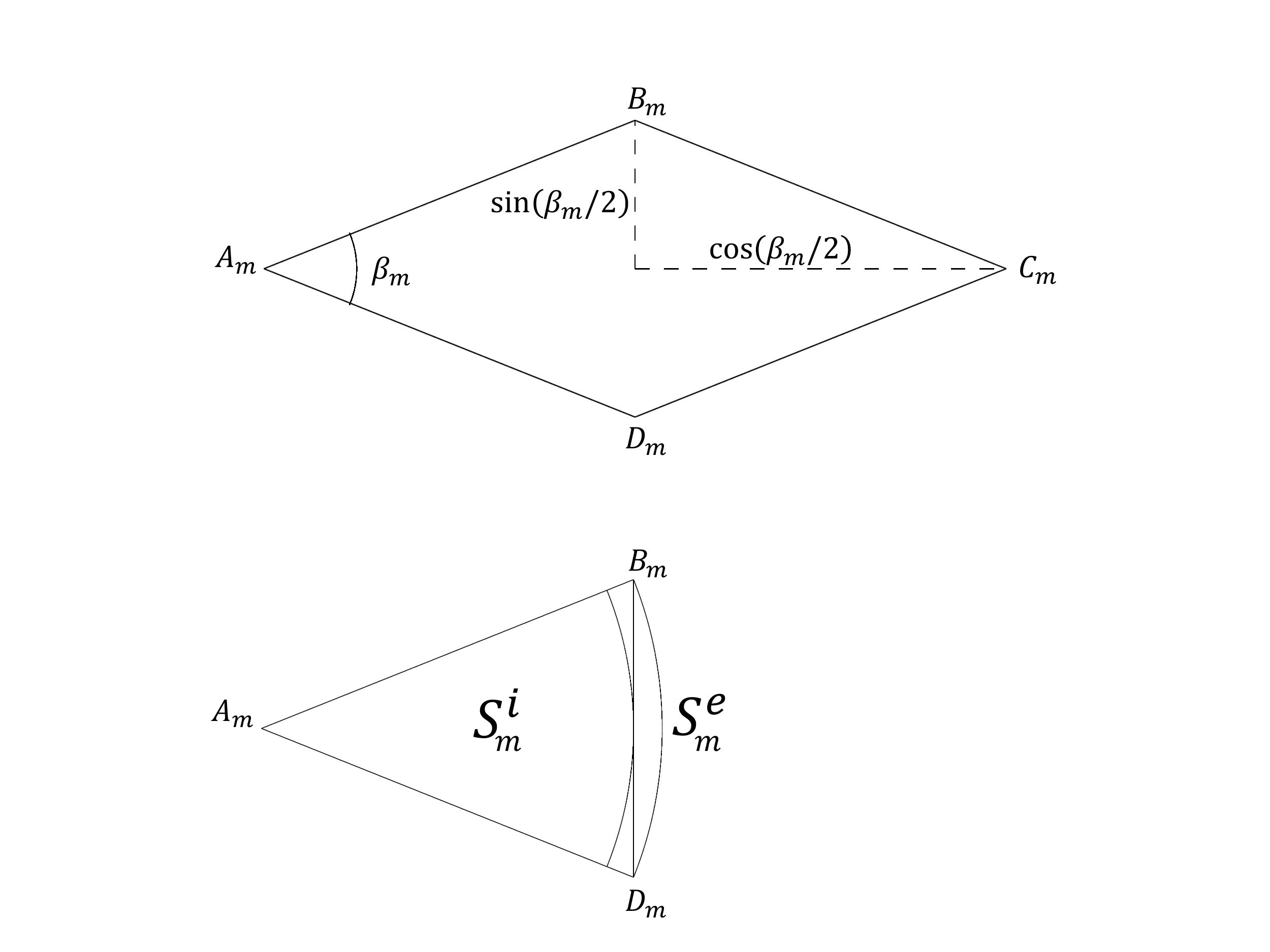} \begin{caption}
{}
\end{caption}
\end{figure}

\end{document}